\title{\sc On the stability of Hamiltonian relative equilibria with non-trivial isotropy}
\author{\sc James Montaldi \& Miguel Rodr\'iguez-Olmos}
\newtheorem{theorem}{Theorem}
\newtheorem{lemma}[theorem]{Lemma}
\newenvironment{proof}%
        {\addvspace\baselineskip\noindent {\sc Proof:}\quad}%
        {\hfill \ding{114} \par\addvspace\baselineskip}  %$\Box$
\newenvironment{proofof}[1]%
        {\addvspace\baselineskip\noindent {\sc Proof of #1:}\quad}%
        {\hfill \ding{114} \par\addvspace\baselineskip}  %$\Box$
\newtheorem{example}[theorem]{Example}
\newcommand\RR{\mathbb{R}}
\newcommand\fg{\mathfrak{g}}
\newcommand\fh{\mathfrak{h}}
\newcommand\fk{\mathfrak{k}}
\newcommand\fm{\mathfrak{m}}
\newcommand\fn{\mathfrak{n}}
\newcommand\fp{\mathfrak{p}}
\renewcommand\tt{\mathfrak{t}}
\newcommand\so{\mathfrak{so}}
\newcommand{\vv}{\mathbf{v}}
\newcommand\Ad{\mathop\mathsf{Ad}\nolimits}
\newcommand\Coad{\mathop\mathsf{Coad}\nolimits}
\renewcommand\d{\mathsf{d}}
\newcommand\e{\mathsf{e}}
\newcommand\ii{\mathsf{i}}
\newcommand{\half}{{\textstyle\frac12}}
\newcommand{\JJ}{\mathbf{J}}
\newcommand{\SO}{\mathsf{SO}}
\newcommand{\restr}[1]{\,\vrule height1.1ex width.4pt
               depth1.2ex\lower1.0ex\hbox{\scriptsize $\:#1$}}
\newcommand{\todo}[1]{\vspace{3mm}\par \noindent
\framebox{\begin{minipage}[c]{0.95 \textwidth} \raggedright\texttt{\color{red} #1}
\end{minipage}}\vspace{3mm}\par}
\newcommand{\note}[1]{\vspace{3mm}\par \noindent
\framebox{\begin{minipage}[c]{0.95 \textwidth} {\raggedright\color{gray} #1}
\end{minipage}}\vspace{3mm}\par}
\renewcommand{\note}[1]{}
\begin{document}

\maketitle

\centerline{\it Dedicated to the memory of Jerrold E.~Marsden}

\bigskip

\begin{abstract}
We consider Hamiltonian systems with symmetry, and relative equilibria with isotropy subgroup of positive dimension. The stability of such relative equilibria has been studied by Ortega and Ratiu \cite{OR99} and by Lerman and Singer \cite{LS98}. In both papers the authors give sufficient conditions for stability which require first determining a splitting of a subalgebra of $\fg$, with different splittings giving different criteria. In this note we remove this splitting construction and so provide a more general and more easily computed criterion for stability.    The result is also extended to apply to systems whose momentum map is not coadjoint equivariant. 
\end{abstract}

\paragraph{Introduction}
Many Hamiltonian systems arising in nature possess symmetry and in particular continuous symmetry---most commonly a group of rotations or Euclidean motions, whether in the plane or in space.  In this note we consider relative equilibria in such systems, which are motions that coincide with 1-parameter symmetry transformations.  Given such a relative equilibrium, it is often important to decide on its (nonlinear) stability, and there are criteria for determining this based on Dirichlet's criterion for ordinary equilibria, but involving the velocity of the relative equilibrium through an appropriate element of the Lie algebra $\fg$ of the group $G$. 

If the action is locally free at the relative equilibrium (meaning the isotropy subgroup at any point of the relative equilibrium is finite) then the ``relative Dirichlet criterion'' is straightforward because the velocity corresponds to a unique element of the Lie algebra $\fg$.  However, when the action fails to be locally free the story is less clear because there will be many different  ``group velocities'' for a given physical velocity. In the late 1990s, two papers were published, by Ortega and Ratiu \cite{OR99} and Lerman and Singer \cite{LS98}, adapting the Dirichlet criterion to deal with this case, while a paper by the first author \cite{Mo97} provides a more topological criterion, that of an ``extremal relative equilibrium'', which we will use in the proof below.  The method of Ortega-Ratiu and Lerman-Singer involves having a splitting of the Lie algebra $\fg$, or an invariant inner product on $\fg$, and showing there is a unique preferred group velocity relative to this splitting, and then using this preferred velocity to define a relative Dirichlet criterion, analogous to the locally free case.  Moreover, Ortega and Ratiu give an example showing how different choices of splitting may produce different critieria for stability so it may be necessary to consider all possible splittings.

The purpose of this note is to dispense with the splitting construction, and to show that the relative Dirichlet criterion is sufficient to guarantee stability, using \emph{any} group veclocity, not merely those that arise from a splitting. 
In the special case that the relative equilibrium is an equilibrium, the preferred velocity is always zero, regardless of the splitting or inner product, and we give an example at the end of this note showing that it can be necessary to use a non-zero velocity to establish the stability.

Since the proof is based on the idea of an extremal relative equilibrium introduced in \cite{Mo97}, we need a technical assumption: that the momentum isotropy subgroup is compact, rather than it merely being a split subalgebra needed by Ortega-Ratiu and Lerman-Singer.  Of course, if the group $G$ is compact then this is no loss of generality.

\paragraph{Setup and background}
Let $(P,\omega)$ be a connected symplectic manifold with a proper and Hamiltonian action of a Lie group $G$, with momentum map $\JJ:P\to \fg^*$ and an invariant Hamiltonian $h:P\to\RR$.  Recall that a momentum map is a map satisfying the differential condition
\begin{equation}\label{eq:momentum map}
 \left<\d\JJ_p(\vv),\,\xi\right> = \omega(\xi_P(p),\,\vv),
\end{equation}
for all $p\in P,\,\vv\in T_pP,\,\xi\in\fg$, and is therefore uniquely determined up to a constant. By a theorem of Souriau \cite{Souriau} there is an action of $G$ on  $\fg^*$ for which a given momentum map is equivariant, and we will assume that isotropy groups and orbits of points in $\fg^*$ refer to this action.  If $G$ is compact, the momentum map can be chosen so that this action is the coadjoint action \cite{Mo97,ORbook}, but in general it requires an affine modification of the coadjoint action which we denote $\Coad_g^\theta$:
\[\Coad_g^\theta\mu = \Coad_g\mu + \theta(g),\]
where $\theta:G\to\fg^*$ is a cocycle; details are of course in Souraiu's book \cite{Souriau}, see also \cite{ORbook}.  

Throughout, we will be referring to a point $p\in P$ and we will write $H=G_p$, the isotropy subgroup at $p$ for the action on $P$,   $\mu=\JJ(p)$ and $K=G_\mu$, the isotropy subgroup for the modified coadjoint action on $\fg^*$.  We will also be assuming throughout that $K=G_\mu$ is compact, and this means that the momentum map can be chosen so that the cocycle $\theta$ vanishes on $K$, as pointed out in \cite{MT03}.  The Lie algebras of $H$ and $K$ are of course denoted $\fh$ and $\fk$ respectively.
A central ingredient is the \emph{symplectic slice} $N$ at a point $p\in P$, which is defined to be
\[N := \ker\d\JJ(p)/\fk\cdot p\,.\]

A point $p\in P$ is a relative equilibrium if the Hamiltonian vector field at $p$ is tangent to the group orbit, or equivalently if $p$ is a critical point of the augmented Hamiltonian $h_\xi = h- \JJ_\xi$ for some $\xi\in\fg$, where $\JJ_\xi(x) = \left<\JJ(x),\xi\right>$. Such an element $\xi$ is called a velocity of the relative equilibrium.

If $\fh$ is nonzero, then $\d\JJ_\zeta(p)=0$ for all $\zeta\in\fh$. Consequently, if $\xi$ is a velocity at $p$ then so is $\xi+\zeta$.  There are two simple observations about the velocities $\xi$ of a relative equilibrium $p$ with $\JJ(p)=\mu$.  Firstly, by conservation of momentum, $\xi\in\fk$, and secondly by conservation of symmetry $\xi\in\mathrm{Lie}(N_G(H))$, the Lie algebra of the normalizer in $G$ of $H$. Combining these, one deduces that $\xi\in\fn$, where $\fn := \mathrm{Lie}(N_K(H))$ (this fact is implicit in \cite{OR99}). 
%\note{Explicitly they show there exists a velocity  $\xi\in\fn$; but then $\xi+\fh\subset\fn$ so it follows that all velocities are in $\fn$.}

If the action in a neighbourhood of a relative equilibrium $p$ is (locally) free, so $\fh=0$, and if $K$ is compact, then there is a well-known criterion for assuring the stability of the relative equilibrium, extending Dirichlet's criterion for the Lyapounov stability of an equilibrium. The criterion is that the restriction $\d^2h_\xi\restr{N}$ of the Hessian to the symplectic slice be definite. In particular, it was shown by Patrick \cite{Pa92} that under this assumption the relative equilibrium is Lyapounov stable relative to the subgroup $K$, which corresponds to the usual definition of Lyapounov stability, but using $K$-invariant neighbourhoods.  The situation where the action on $P$ is free but $K$ fails to be compact, and more generally where $\mu$ is not `split', is considered in \cite{PRW04}.

There are several results in the literature giving criteria for the stability of relative equilibria at points where the action is not locally free (so at points $p$ with $\fh\neq0$).  They all (as do we) require the group action on $P$ to be proper, at least in a neighbourhood of $p$. The criteria of Lerman-Singer \cite{LS98} and Ortega-Ratiu \cite{OR99} begin with requiring an $H$-invariant splitting in \cite{LS98} of the Lie algebra $\fk$, or in \cite{OR99} of $\fn$, as 
\[\fk = \fm \oplus \fh \quad\textrm{or}\quad \fn = \fp\oplus\fh\,.\]
These decompositions are constructed by using an $H$-invariant inner product on $\fk$ or $\fn$, which exist as $H$ is compact, where $H$ acts on $\fk$ or $\fn$ by the adjoint action. The two cases are related by noting that $\fn$ is an invariant subspace under the action by $H$, so any invariant inner product on $\fn$ can be extended to one on $\fk$, while any one on $\fk$ restricts to one on $\fn$, and consequently one can choose $\fp=\fm\cap\fn$.  If $\xi_1\in\fg$ is a velocity of the relative equilibrium then as we have pointed out $\xi_1\in\fn$, and so the set of all velocities is the affine subspace $\xi_1+\fh$ of $\fn$. 

The criterion for stability in both papers is as before that $\d^2h_{\xi^\perp}\restr{N}$ should be a definite quadratic form, where now $\xi^\perp$ is the ``orthogonal velocity'' which is defined to be the uniqe velocity orthogonal to $\fh$ with respect to the chosen splitting, and hence contained in $\fm$ or $\fp$ (and hence always in $\fp$).  Since the inner product (or splitting) is $H$-invariant, the uniqueness of $\xi^\perp$ shows that it is fixed by the adjoint action of $H$.

There is some flexibility in this construction as there may be a choice of invariant inner product, and usually a different choice of inner product leads to a different criterion.  In particular if $\fk$ is Abelian, then any inner product is allowed, and hence any splitting, so if $p$ is not an equilibrium then any velocity can be realized as an orthogonal velocity.  Notice however, that if $p$ is an equilibrium then the orthogonal velocity is always 0, regardless of the splitting (we give an explicit example at the end of this note).  Furthermore, it is not hard to find situations where there is a unique splitting in which case there is again a unique orthogonal velocity; for example if $G=G_1\times G_2$ with $G_1$ semisimple,  $H=G_1$ and $\mu=0$, then $\xi^\perp=(0,\xi_2)$.

%\note{Alternative: Let $G=\SU(n)$, $H=\SU(p)$ for $p\leq n-2$. Then normalizer of $H$ is $N=S(\U(p)\times \U(q))$, where $p+q=n$. Then $\fn\simeq\su(p)\oplus\fu(q)$, where $\fu(q)$ is the centralizer of $\fh$, consisting of the `complementary' $\su(q)$ togther with the 1-dimensional subspace of $\fg$ spanned by $\pmatrix{\ii qI_p&0\cr 0&-\ii pI_q}$, and  this is the unique $H$-invariant splitting.  Now let $\xi=(\xi_p,\xi_q)\in\fn$, be a velcocity and $\mu=0$.  Then $\xi^\perp = (0,\xi_q)$ is the orthogonal velocity.}

\paragraph{Main result} The aim of this note is to eliminate the splitting construction, so to be able to use any velocity for the criterion, not only orthogonal ones.  We show below (Lemma \ref{lemma:well-defined}) that for any velocity $\xi$, the Hessian $\d^2h_\xi(p)$ induces a well-defined quadratic form on the symplectic slice, which we denote $\d^2h_\xi\restr{N}$. Note that in general this quadratic form is not $H$-invariant, although it is if $\xi=\xi^\perp$ for any orthogonal velocity.

\begin{theorem}\label{thm:stability}
Let $(P,\omega)$ be a symplectic manifold with a proper Hamiltonian action of the Lie group $G$, with momentum map $\JJ$, and let the smooth invariant function $h:P\to \RR$ define a symmetric Hamiltonian system on $P$.  Let $p$ be a relative equilibrium of this system, and suppose that $K:=G_\mu$ is compact, where $\mu=\JJ(p)$ and $G$ acts on $\fg^*$ so that $\JJ$ is equivariant.  Let $\xi\in\fg$ be a velocity of $p$.  If the quadratic form $\d^2h_{\xi}\restr{N}$ on the symplectic slice at $p$ is definite then $p$ is Lyapounov stable relative to $K$.
\end{theorem}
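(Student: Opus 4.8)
The plan is to deduce Theorem~\ref{thm:stability} from the ``extremal relative equilibrium'' criterion of \cite{Mo97}: a relative equilibrium $p$ whose momentum isotropy group $K=G_\mu$ is compact is Lyapounov stable relative to $K$ as soon as it is \emph{extremal}, i.e.\ as soon as the restriction $h\restr{\JJ^{-1}(\mu)}$ attains a local extremum at $p$ --- equivalently, since $h$ is $K$-invariant, a local extremum at $p$ modulo the orbit $K\cdot p$. Thus the whole job is to show that definiteness of $\d^2h_\xi\restr{N}$ forces $p$ to be extremal; crucially, no choice of splitting or invariant inner product will be needed.

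First I would put the $G$-action into Marle--Guillemin--Sternberg normal form about the orbit $G\cdot p$, in the affine version appropriate to $\Coad^\theta$ (harmless, since $\theta$ may be chosen to vanish on $K$). On a $G$-invariant neighbourhood, $P$ is then modelled on $G\times_H(\fm^*\oplus N)$, where $\fk=\fh\oplus\fm$, $N$ is the symplectic slice at $p$, and $\JJ([g,\rho,n])=\Coad^\theta_g\bigl(\mu+\rho+\JJ_N(n)\bigr)$ with $\JJ_N:N\to\fh^*$ the (quadratic) momentum map of the linear symplectic $H$-action on $N$. From this formula one reads off the standard description of the momentum fibre near a point with non-trivial isotropy: a neighbourhood of $p$ in $\JJ^{-1}(\mu)$ is $K$-equivariantly identified with $K\times_H\JJ_N^{-1}(0)$, with $p$ corresponding to $[e,0,0]$ and $K\cdot p$ to the zero section. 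Moreover $\ker\d\JJ(p)$ corresponds to $(\fk/\fh)\oplus N$ and $\fk\cdot p$ to $\fk/\fh$, so the $N$-summand is a concrete complement realizing the symplectic slice, and the form $\d^2h_\xi\restr{N}$ of Lemma~\ref{lemma:well-defined} is precisely the Hessian at $0$ of the restriction of $h_\xi$ to the slice $\{[e,0,n]:n\in N\}$. (See \cite{ORbook} for the normal form and for the resulting description of $\JJ^{-1}(\mu)$.)

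Next I would follow $h$ along $\JJ^{-1}(\mu)$ in this model. Since $p$ is a critical point of $h_\xi$, the function $n\mapsto h_\xi([e,0,n])$ has a critical point at $0$ with Hessian $\d^2h_\xi\restr{N}$, which is definite by hypothesis; so by Taylor's theorem $0$ is a \emph{strict} local extremum of the restriction of $h_\xi$ to the full slice $\{[e,0,n]:n\in N\}$, and hence \emph{a fortiori} of its restriction to the subset $\JJ_N^{-1}(0)$. On the other hand $\JJ_\xi\equiv\left<\mu,\xi\right>$ on $\JJ^{-1}(\mu)$, so there $h_\xi=h-\left<\mu,\xi\right>$; and since $h$ is $G$-invariant, $h([g,0,n])=h([e,0,n])$. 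Hence for $q=[g,0,n]\in\JJ^{-1}(\mu)$ close to $p$ (so $g\in K$ and $n\in\JJ_N^{-1}(0)$ small),
\[
 h(q)=h([e,0,n])=h_\xi([e,0,n])+\left<\mu,\xi\right>\,,
\]
and by the previous sentence the right-hand side is $\geq h(p)$, with equality exactly when $n=0$, that is, exactly on $K\cdot p$. So $h\restr{\JJ^{-1}(\mu)}$ has a strict local extremum at $p$ modulo $K\cdot p$, $p$ is an extremal relative equilibrium, and Lyapounov stability relative to $K$ follows from \cite{Mo97}.

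The step where the splitting of \cite{LS98,OR99} disappears is the innocuous-looking passage ``definiteness of $\d^2h_\xi\restr{N}$ on all of $N$ yields a strict extremum on $\JJ_N^{-1}(0)$'': one never restricts the quadratic form to the singular cone $\JJ_N^{-1}(0)$, only the scalar inequality it produces, and this is legitimate for \emph{any} velocity $\xi$. Indeed, two velocities differ by some $\zeta\in\fh$, and $\d^2h_{\xi+\zeta}\restr{N}-\d^2h_\xi\restr{N}$ is the Hessian of $n\mapsto\left<\JJ_N(n),\zeta\right>$, which vanishes identically on $\JJ_N^{-1}(0)$; the various forms $\d^2h_\xi\restr{N}$ therefore agree once restricted to the reduced slice, and definiteness of any one of them on $N$ is more than enough. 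I expect the only real work to be the normal-form bookkeeping: establishing precisely that, near a point of non-trivial isotropy, $\JJ^{-1}(\mu)$ consists of no more than the $K$-orbit directions together with the $\JJ_N^{-1}(0)$ directions, and that $\d^2h_\xi\restr{N}$ is the slice Hessian of $h_\xi$ --- with the affine ($\Coad^\theta$) modification requiring the most care. Everything else is then Taylor's theorem and an invocation of \cite{Mo97}.
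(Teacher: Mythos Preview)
Your proposal follows essentially the same strategy as the paper: show that definiteness of $\d^2h_\xi\restr{N}$ makes $p$ an extremal relative equilibrium (via the MGS normal form and the identification of a neighbourhood of $\bar p$ in the reduced space with $\JJ_N^{-1}(0)/H$), and then appeal to the extremal criterion of \cite{Mo97}.

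The one structural difference worth flagging is how the possible non-compactness of $G$ is handled. You invoke \cite{Mo97} directly on the full $G$-system, taking for granted that its conclusion holds whenever $G_\mu$ is compact. The paper is more cautious: since \cite{Mo97} is formulated for compact symmetry groups, it first applies the cross-section theorem (as in \cite{GS84,MT03}) to pass to the $K$-invariant symplectic submanifold $R=\JJ^{-1}(S_\mu)$, proves extremality and applies \cite{Mo97} for the compact group $K$ acting on $R$, and then cites \cite[Proposition~2.3]{LS98} to transfer $K$-stability from $R$ back to $P$. Your route is shorter but tacitly assumes this extension; the paper's three-step argument supplies it explicitly. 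A cosmetic difference: the paper uses the Morse Lemma rather than a Taylor remainder estimate to pass from definiteness of the Hessian to a strict local extremum on $N$.
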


Before proving this theorem, we recall the Witt-Artin decomposition of the tangent space $T_pP$ using the group action and symplectic form, and the Marle-Guillemin-Sternberg normal form.  Define four sub-quotients of $T_pP$ as follows,
\begin{equation}
  \label{eq:Witt-Artin}
 \begin{array}{rcl}
  T_0 &=& \fg\cdot p \cap \ker \d\JJ(p)\ =\ \fk\cdot p\,, \\
  T &=& \fg\cdot p/T_0\,, \\
  N &=& \ker \d\JJ(p)/T_0\,, \\
  N_0 &=& T_pM/(\, \fg\cdot p + \ker\d\JJ(p))\,.
\end{array}
\end{equation}
The subspaces $T$ and $N$ are symplectic, while $T_0$ is isotropic, and paired with $N_0$ by the symplectic form.  The group action defines isomorphisms $\fk/\fh\simeq T_0$ and $\fg/\fk\simeq T$, so the symplectic form provides an isomorphism 
\[N_0\simeq (\fk/\fh)^* \simeq \fk^*\cap\fh^\circ \,.\]
Here $\fk^*\cap\fh^\circ$ is the annihilator of $\fh$ within $\fk^*$; some authors denote it by $\mathfrak{m}^*$ (after the splittings described above, though it is independent of the splitting). 

Each space carries an action of the isotropy subgroup $H$, and as this group is compact there is an isomorphism of $H$-spaces
\[T_pP \simeq T_0\oplus T\oplus N\oplus N_0.\]
The vector space $N$ with its symplectic structure and action of $H$ is the symplectic slice and has its own homogeneous quadratic momentum map $\JJ_N:N\to \fh^*$.  

\note{With respect to this decomposition, the symplectic form on $T_pP$ has the form
\[.
  [\omega]_p=\left[\begin{array}{cccc}
      0&0&0&A\cr 0&\omega_{T}&0&*\cr 0&0&\omega_N&*\cr -A^t&*&*&*
    \end{array}\right],
\]
where $\omega_{T}$ and $\omega_N$ are the restrictions of
$\omega$ to $T$ and $N$, $A$ is nondegenerate (and by choosing bases of $T_0$ and $N_0$ can be made to be the identity matrix), and the $*$'s depend on the splitting and are of little interest.  \fbox{do we want this?}}

The Witt-Artin decomposition determines the local geometry of the action, using the Marle-Guillemin-Sternberg normal form.  This states that there is an invariant neighbourhood of $p$ which is $G$-symplectomorphic to an invariant neighbourhood $U$ of $[e,0,0]$ in the symplectic space $Y$ with momemtum map $\JJ_Y:Y\to\fg^*$ given by,
\begin{equation}\label{eq:MGS}
\begin{array}{rcl}
Y&=&G\times_{H} \left((\fk^*\cap\fh^\circ) \times N\right),\\[6pt]
\JJ_Y([g,\,\rho,\,v]) &=& \Coad_g^\theta(\mu+\rho+\JJ_N(v)).
\end{array}
\end{equation}
Here the $H$-action on $G\times\left(\fk^*\cap\fh^\circ\right)\times N$ is by
$h\cdot(g,\,\rho,\,v) = (gh^{-1}, \Coad_h\rho,h\cdot v)$ (recall we have chosen $\theta$ to vanish on $K$, and hence on $H$). The notation $[g,\,\rho,\,v]$ denotes the $H$-orbit of $(g,\,\rho,\,v)$.  The $G$-action is simply $g_1\cdot[g,\,\rho,\,v]=[g_1g,\,\rho,\,v]$.  Since a neighbourhood of $p$ in $P$ is diffeomorphic to $U\subset Y$, the Hamiltonian $h$ on $P$ defines a Hamiltonian on $U$, which we also denote by $h$.

\begin{proofof}{Theorem \ref{thm:stability}}
The proof is in three stages. Firstly we apply the cross-section theorem of Gullemin and Sternberg \cite{GS84}, as modified in \cite{MT03} to deal with non-coadjoint actions. This reduces the problem to a system on a smaller space with compact symmetry group $K$. Secondly, we show that the relative equilibrium is extremal in the sense of \cite{Mo97} and, since $K$ is compact, we deduce that it is Lyapounov stable relative to $K$. And thirdly, we apply a result of Lerman and Singer \cite{LS98} to deduce that the original relative equilibrium is also Lyapounov stable relative to $K$.

\medskip

(1) Since $K$ is compact, there is a $K$-invariant slice $S_\mu$ to the modified coadjoint orbit $G\cdot\mu$.  It is shown in \cite[Section 3]{MT03} that, in a neighbourhood of $p$, $R:=\JJ^{-1}(S_\mu)$ is a $K$-invariant symplectic submanifold of $P$, and that the momentum map $\JJ_R:R\to\fk^*$ can be chosen to be the full momentum map $\JJ$ restricted to $R$ followed by the natural projection $\fg^*\to\fk^*$.  Furthermore, as $R$ is a union of level sets of $\JJ$, it is invariant under the original dynamics, so the restriction of $h$ to $R$ determines the dynamics on $R$ by the usual equations of Hamilton.  Note that the Witt-Artin decomposition of $T_pR$ is
\[T_pR = T_0 \oplus N\oplus N_0\]
where $T_0,\,N$ and $N_0$ are the same spaces as in eq.~(\ref{eq:Witt-Artin}), and $\ker\d\JJ_R(p)=T_0\oplus N$.

\medskip

(2) We want to show that the relative equilibrium $p$ is extremal \cite{Mo97}; this means that the corresponding equilibrium point in the reduced space is a local extremum of the reduced Hamiltonian.  Let us assume that $\d^2h_\xi\restr{N}$ is positive definite; the negative definite case proceeds similarly.  We therefore want to show there is a neighbourhood $U$ of $p$ in $\JJ^{-1}(\mu)=\JJ_R^{-1}(\mu)$ such that $x\in U\setminus K\cdot p \Rightarrow h(x) > h(p)$.  

To this end we use the Marle-Guillemin-Sternberg normal form (\ref{eq:MGS}), with $G$ replaced by $K$, thus:
\begin{equation}\label{eq:MGS-mu}
\begin{array}{rcl}
Z&=&K\times_{H} \left(\fh^\circ \times N\right),\\[6pt]
\JJ_Z([g,\,\rho,\,v]) &=& \mu+\Coad_g(\rho+\JJ_N(v)),
\end{array}
\end{equation}
where $\JJ_Z:Z\to\fk^*$. (Since $g\in K$ we have $\Coad_g(\mu)=\mu$, and we now take $\fh^\circ$ to mean the appropriate subset of $\fk^*$.)

From eq.~(\ref{eq:MGS-mu}), one sees that $\JJ_Z^{-1}(\mu) = \{[g,\,\rho,\,v]\in Z \mid \rho=0,\;\JJ_N(v)=0\}$ (this can also be found in \cite[Proposition 13]{BL97}), so that the reduced space at $\mu$ in this model is 
\[Z_\mu \ = \ \JJ_Z^{-1}(\mu)/K \ \simeq\ \ \JJ_N^{-1}(0)/H.\]
Now let $\bar q\in Z_\mu$ be a point distinct from $\bar p$, and let $q\in N$ be a corresponding point in $\JJ_N^{-1}(0)$. We wish to show $h_\xi(q)>h_\xi(p)$. 

We are assuming $\d^2h_\xi\restr{N}$ to be non-degenerate.  By the Morse Lemma there is therefore a diffeomorphism $\phi$ of $N$ (preserving $p$) such that $h_\xi = h_\xi(p)+\d^2h_\xi\restr{N}\circ\phi$. It follows, using the fact that $\d^2h_\xi\restr{N}$ is positive definite, that $h_\xi(q) = h_\xi(p) + \d^2h_\xi\restr{N}(\phi(q)) > h_\xi(p)$, as required.

\medskip

(3) Finally, Lerman and Singer \cite[Proposition 2.3]{LS98} states that if $p$ is a relative equilibrium on $R$ which is Lyapounov stable relative to $K$, then $p$ is also Lyapounov stable relative to $K$ in the full $G$-invariant system on $P$.
\end{proofof}

\begin{lemma} \label{lemma:well-defined}
If $p$ is a relative equilibrium and $\xi\in\fg$ is a velocity, then the restriction of the quadratic form $\d^2h_\xi(p)$ to $\ker\d\JJ$ descends to a well-defined quadratic form on the symplectic slice $N$.
\end{lemma}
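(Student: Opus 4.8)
The plan is to reduce the statement to the claim that the subspace $T_0=\fk\cdot p$ is contained in the null space of the restriction of $\d^2h_\xi(p)$ to $\ker\d\JJ(p)$: since $T_0\subseteq\ker\d\JJ(p)$ and $N=\ker\d\JJ(p)/T_0$, this is exactly the condition for a symmetric bilinear form on $\ker\d\JJ(p)$ to descend to $N$. I first record that, as $\xi$ is a velocity, $p$ is a critical point of $h_\xi=h-\JJ_\xi$, so $\d h_\xi(p)=0$ and the Hessian $\d^2h_\xi(p)$ is an invariantly defined symmetric bilinear form on $T_pP$ (at a critical point no auxiliary connection is needed). Thus everything comes down to proving $\d^2h_\xi(p)(\eta_P(p),v)=0$ for every $\eta\in\fk$ and every $v\in\ker\d\JJ(p)$.

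The key step is to identify the smooth function $x\mapsto\langle\d h_\xi(x),\eta_P(x)\rangle$ on $P$. Invariance of $h$ gives $\d h(\eta_P)\equiv0$. For the remaining term, differentiating the equivariance relation $\JJ(\exp(t\eta)\cdot x)=\Coad^\theta_{\exp(t\eta)}\JJ(x)$ at $t=0$ and pairing with $\xi$ gives $\d\JJ_\xi(\eta_P)=\JJ_{[\xi,\eta]}+c$, where $c$ is a constant (the Lie-algebra cocycle of $\theta$ evaluated on $(\eta,\xi)$, which moreover vanishes for $\eta\in\fk$ by our choice of $\theta$). Hence, as functions on $P$,
\[\langle\d h_\xi(\cdot),\,\eta_P(\cdot)\rangle=-\JJ_{[\xi,\eta]}-c.\]

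Next I would differentiate this equality at $p$ and evaluate on $v\in T_pP$. On the left, the Leibniz rule gives $\d^2h_\xi(p)(v,\eta_P(p))+\langle\d h_\xi(p),(\d\eta_P)(p)v\rangle$, and the second term vanishes because $\d h_\xi(p)=0$; on the right one gets $-\langle\d\JJ(p)v,[\xi,\eta]\rangle$, the constant $c$ contributing nothing. Therefore
\[\d^2h_\xi(p)(v,\eta_P(p))=-\langle\d\JJ(p)v,\,[\xi,\eta]\rangle,\]
which vanishes as soon as $v\in\ker\d\JJ(p)$. As $\eta$ ranges over $\fk$ the vectors $\eta_P(p)$ exhaust $T_0=\fk\cdot p$, so $T_0$ lies in the null space of $\d^2h_\xi(p)\restr{\ker\d\JJ(p)}$, and the form descends to a well-defined quadratic form on $N$.

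I expect the only mildly delicate point to be the sign/cocycle bookkeeping in the formula for $\langle\d h_\xi,\eta_P\rangle$; but since the cocycle term is constant it disappears on differentiation, so it is not a genuine obstacle, and for the same reason compactness of $K$ plays no role in this lemma. I would also note in passing that the same computation yields $\d^2h_\xi(p)(v,\eta_P(p))=0$ for all $v\in\ker\d\JJ(p)$ and all $\eta\in\fg$; only the case $\eta\in\fk$ is used here, because it is $\fk\cdot p$ that we quotient out.
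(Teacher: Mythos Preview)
Your argument is correct and reaches the same key identity as the paper, namely $\d^2h_\xi(p)(\eta_P(p),v)=\langle\d\JJ(p)v,[\eta,\xi]\rangle$, which vanishes for $v\in\ker\d\JJ(p)$ and so forces $T_0$ into the null space. The paper packages the computation slightly differently---it differentiates the one-parameter family of critical-point equations $\d h_{\Ad_{\exp(t\eta)}\xi}(\exp(t\eta)\cdot p)=0$ in $t$, rather than first identifying $\langle\d h_\xi,\eta_P\rangle=-\JJ_{[\xi,\eta]}-c$ on $P$ and then differentiating at $p$---but the two routes are equivalent and equally short.
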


\begin{proof}
Since the Hamiltonian vector field is equivariant, the point $g\cdot p$ is a relative equilibrium with velocity $\Ad_g\xi$, and hence for all $g\in G$, the corresponding differential vanishes:
\[\d h_{\Ad_g\xi}(g\cdot p) = 0.\]
Write $g=\exp(t\eta)$ for $\eta\in\fg$, and differentiate with respect to $t$ at $t=0$ to obtain
\[\d^2h_\xi(\eta\cdot p,-) -\d\JJ_{[\eta,\xi]} = 0,\]
where the differentials are taken at $p$. It follows that for any $v\in \ker\d\JJ(p)$ we have
\[\d^2h_\xi(\eta\cdot p,\,v)=0.\]
It then follows that, given any $\eta\in\fk$ (so that $\eta\cdot p\in\ker\d\JJ(p)$) 
\[\d^2h_\xi(v+\eta\cdot p,\,v+\eta\cdot p)=\d^2h_\xi(v,\,v),\]
as required.
\end{proof}

\begin{example}\label{eg}
Consider $P=\RR^4$ with symplectic form $\omega=\d x_1\wedge\d y_1 + \d x_2\wedge \d y_2$,  and with a Hamiltonian action of $\SO(2)$ given by
\[\theta\mapsto \pmatrix{R_\theta &0 \cr 0 & R_{-\theta}}.\]
The momentum map is $\JJ(x_1,y_1,x_2,y_2) = \half(x_1^2+y_1^2) - \half(x_2^2+y_2^2)$, and the origin is a fixed point of the action.  Consider the $\SO(2)$-invariant Hamiltonian 
\[h = (x_1^2+y_1^2) - 2(x_2^2+y_2^2) + \cdots,\]
where the $\cdots$ refers to higher order terms invariant under $\SO(2)$. The origin is a (relative) equilibrium with velocity 0, but also velocity equal to any $\xi\in\so(2) \simeq\RR$.  It is easy to see that $\d^2h_\xi$ is negative definite for $\xi\in(2,\,4)$, so that Theorem \ref{thm:stability} guarantees the origin is Lyapounov stable (relative to $\SO(2)$). However, as expained earlier, the criteria of Lerman-Singer and Ortega-Ratiu do not guarantee stability as the unique orthogonal velocity is 0 and $\d^2h(0)$ is not definite.
\end{example}

We give a different proof of Theorem \ref{thm:stability} in \cite{MRO}, using the so-called bundle equations, and present a different example.

\paragraph{Acknowledgements} 
The research of M.R-O. was supported by the research project MTM2006-03322 and a European Marie Curie Fellowship (IEF)  held at the University of Manchester.

\small
%%%%%%%%%%%%%%%%%%%%%%%%%%%%%%%%%%%%%%%%%%%%%%%%%

\bigskip

{\obeylines \setlength{\parindent}{0pt} \it
School of Mathematics 
University of Manchester
Manchester M13 9PL 
UK 

\bigskip

Department of Applied Mathematics IV
Technical University of Catalonia 
Barcelona
Spain}

\end{document}